\newtheorem{thm}{Theorem}
\newtheorem{cor}[thm]{Corollary}
\newtheorem{lem}[thm]{Lemma}
\newtheorem{prop}[thm]{Proposition}
\theoremstyle{definition}
\newtheorem{rem}{Remark}
\newcommand{\rr}{\mathbb{R}}
\newcommand{\nn}{\mathbb{N}}
\newcommand{\con}{\smallfrown}
\newcommand{\ee}{\varepsilon}
\newcommand{\sbs}{\mathrm{SB}}
\newcommand{\sd}{\mathrm{SD}}
\newcommand{\refl}{\mathrm{REFL}}
\newcommand{\diam}{\mathrm{diam}}
\newcommand{\sz}{\mathrm{Sz}}
\newcommand{\sss}{\mathcal{S}}
\newcommand{\ccc}{\mathcal{C}}
\newcommand{\PB}{\mathbf{\Pi}}
\begin{document}

\title{Definability under duality}
\author{Pandelis Dodos}
\address{Universit\'{e} Pierre et Marie Curie - Paris 6, Equipe d' Analyse
Fonctionnelle, Bo\^{i}te 186, 4 place Jussieu, 75252 Paris Cedex 05, France.}
\email{pdodos@math.ntua.gr}

\footnotetext[1]{2000 \textit{Mathematics Subject Classification}: 03E15, 46B10.}
\footnotetext[2]{\textit{Key words}: Banach spaces with separable dual, analytic
classes of Banach spaces, Szlenk index, selection.}

\maketitle

%------------------------Abstract-------------------------------%

\begin{abstract}
It is shown that if $A$ is an analytic class of separable Banach spaces with
separable dual, then the set $A^*=\{ Y:\exists X\in A \text{ with } Y\cong X^*\}$
is analytic. The corresponding result for pre-duals is false.
\end{abstract}

%--------------------------Introduction-----------------------%

\section{Introduction}

\noindent \textbf{(A)} All separable Banach spaces can be realized, up to isometry, as subspaces of
$C(2^\nn)$. Denoting by $\sbs$ the set of all closed linear subspaces of $C(2^\nn)$ and endowing $\sbs$
with the relative Effros-Borel structure, the set $\sbs$ becomes the standard Borel space of all
separable Banach spaces (see \cite{AD}, \cite{AGR}, \cite{Bos} and \cite{Kechris}). By identifying
any class of separable Banach spaces with a subset of $\sbs$, the space $\sbs$ provides the
appropriate frame for studying structural properties of classes of Banach spaces.
This identification is ultimately related to universality problems in Banach Space Theory. This is
justified by a number of results (\cite{AD}, \cite{DF}, \cite{D} and \cite{DLo})
of which the following one, taken from \cite{DF}, is a sample.
\medskip

\noindent \textit{If $A$ is an analytic subset of $\sbs$ such that every $X\in A$ is 
reflexive, then there exists a reflexive Banach space $Y$, with a Schauder basis, 
that contains isomorphic copies of every $X\in A$.}
\medskip

\noindent To see how such a result is used, let us consider the set $\mathrm{UC}$ consisting 
of all $X\in\sbs$ which are uniformly convex. It is a classical fact (see \cite{LT}) that 
$\mathrm{UC}$ contains only reflexive spaces. Moreover, it is easily checked that
$\mathrm{UC}$ is a Borel subset of $\sbs$. Applying the above result, we recover a recent
result of E. Odell and Th. Schlumprecht \cite{OS} asserting the existence of a separable
reflexive space $R$ containing an isomorphic copy of every separable uniformly convex Banach space.
The problem of the existence of such a space was posed by Jean Bourgain \cite{Bou2}.
\medskip

\noindent \textbf{(B)} As we have already indicated, in applications one has to decide whether 
a given class of separable Banach spaces is analytic or not. Sometimes this is straightforward
to check invoking, simply, the definition of the class. There are classes, however, which are
defined implicitly using a certain Banach space operation. In these cases, usually, deeper
arguments are involved.

This note is concerned with the question whether analyticity is preserved under duality, a very basic
operation encountered in Banach Space Theory. Precisely, the following two questions are naturally
asked in such a context.
\begin{enumerate}
\item[\textbf{(Q1)}] If $A$ is an analytic class of separable dual Banach spaces, then
is the set $A_*=\{ X\in\sbs: \exists Y\in A \text{ with } X^*\cong Y\}$ analytic?
\item[\textbf{(Q2)}] If $A$ is an analytic class of separable Banach spaces with
separable dual, then is the set $A^*=\{Y\in\sbs: \exists X\in A \text{ with } Y\cong X^*\}$
analytic?
\end{enumerate}
Question (Q1) has a negative answer and a counterexample is the set $A=\{ Y\in\sbs:
Y\cong \ell_1\}$, i.e. the isomorphic class of $\ell_1$ (a more detailed explanation
will be given later on). However, for question (Q2) we do have a positive result.
\begin{thm}
\label{t1} Let $A$ be an analytic class of separable Banach spaces with separable dual.
Then the set $A^*=\{ Y\in\sbs: \exists X\in A \text{ with } Y\cong X^*\}$ is analytic.
\end{thm}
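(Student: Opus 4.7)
The plan is to split the proof into three steps: first, uniformly bound the Szlenk index on $A$ by a countable ordinal; second, produce a Borel map $X\mapsto X^*$ on the resulting Borel set; third, project.

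\textbf{Step 1 (Boundedness of the Szlenk index).} First I would use that a separable Banach space $X$ has separable dual if and only if $\sz(X)<\omega_1$, and that $\sz$ is a $\PB_1^1$-rank on $\sbs$ (Bossard), taking the value $\omega_1$ off $\sbs_{\sd}$. Since $A$ is analytic and $\sz(X)<\omega_1$ for every $X\in A$, the boundedness principle for coanalytic ranks will yield a countable ordinal $\xi$ with $\sz(X)\le\xi$ for all $X\in A$. Consequently $A$ is contained in the Borel set $\sbs_\xi:=\{X\in\sbs:\sz(X)\le\xi\}$.

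\textbf{Step 2 (Borel realisation of the dual).} The main technical step will be to construct a Borel map $\Phi:\sbs_\xi\to\sbs$ with $\Phi(X)\cong X^*$ for every $X\in\sbs_\xi$. My route would be as follows. First, fix (by Kuratowski--Ryll-Nardzewski) a Borel assignment $X\mapsto(d_n^X)$ of dense sequences in $B_X$; the map $f\mapsto(f(d_n^X))_n$ then gives an isometric embedding $J_X:X^*\hookrightarrow\ell_\infty$ whose image is separable precisely because $X\in\sbs_\xi$. Second, use the uniform bound $\sz(X)\le\xi$ -- which makes the Szlenk derivation of $B_{X^*}$ a transfinite construction of length at most $\xi$ -- to select Borel-ly in $X$ a countable norm-dense subset of $J_X(X^*)$, coding the transfinite derivation by a single real that codes $\xi$. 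Finally, transport the closed linear span of this sequence into $\sbs$ via a Borel embedding of separable subspaces of $\ell_\infty$ into $C(2^\nn)$. This is expected to be the main obstacle: the boundedness from Step 1 is precisely what converts the a priori uncountable construction of $X^*$ into a genuinely Borel one.

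\textbf{Step 3 (Projection).} The isomorphism relation $\cong$ on $\sbs\times\sbs$ is analytic: it is the projection of the Borel set of triples $(Y_1,Y_2,T)$ in which $T$ codes a linear isomorphism $Y_1\to Y_2$ by its values on dense sequences. Combined with the analyticity of $A$ and the Borel measurability of $\Phi$ from Step 2, this will make the set
\[
\bigl\{(X,Y)\in A\times\sbs:Y\cong\Phi(X)\bigr\}
\]
analytic, and $A^*$, being its projection onto the second coordinate, will be analytic as claimed. The absence of any analogue of the Szlenk index controlling the pre-dual direction is, morally, what is responsible for the negative answer to (Q1) indicated in the introduction.
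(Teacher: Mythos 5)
Your overall architecture coincides with the paper's: place $A$ inside a Borel subset of $\sd$, use boundedness of a Szlenk-type co-analytic rank to run a transfinite, Borel-in-$X$ selection of a norm-dense sequence in the dual ball, and then write down an analytic formula involving isomorphism. Two of your choices differ from the paper only in packaging. For the Borel envelope, the paper applies Lusin separation to the co-analytic set $\sd$ rather than taking the initial segment $\{X:\sz(X)\leq\xi\}$ of Bossard's rank; both are fine. For the endgame, the paper never constructs a Borel map $X\mapsto X^*$ into $\sbs$: it keeps $B_1(X^*)$ as a compact subset $K_{X^*}$ of $H=[-1,1]^{\nn}$ and expresses $Y\cong X^*$ by asking for a dense sequence $(y_n)$ in $Y$ that is $k$-equivalent to the selected sequence $\big(f_n(X)\big)$. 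This sidesteps your last sub-step -- a Borel realization inside $C(2^{\nn})$ of the closed span of a sequence in $\ell_\infty$ -- which is doable but is extra machinery you would have to supply and which the statement does not require.

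The genuine gap is the step you yourself flag as the main obstacle: ``select Borel-ly in $X$ a countable norm-dense subset of $J_X(X^*)$, coding the transfinite derivation by a single real.'' This assertion is exactly the paper's Proposition \ref{t2}, which is the technical heart of the whole note, and a sentence does not discharge it. To close it one needs: (i) a derivative $\mathbb{D}_\ee$ on $K(H)$ removing all basic open slices of norm-diameter at most $\ee$, shown to be a \emph{Borel} map on $K(H)$; (ii) boundedness applied to the co-analytic rank $K\mapsto |K|_{\mathbb{D}_\ee}$ evaluated on the analytic family $\{K_{X^*}:X\in Z\}\subseteq K(H)$ -- your uniform bound on $\sz(X)$ is morally the same thing but would still have to be formally related to the rank that actually drives the derivation; (iii) a transfinite induction showing that the iterated derived sets form Borel subsets of $Z\times H$ with compact sections (this uses the correspondence between Borel sets with compact sections and Borel maps into $K(H)$); and (iv) at each stage, a uniformization theorem -- the paper uses Arsenin--Kunugui, the relevant sections being non-empty and $\sigma$-compact -- to pick, Borel-in-$X$, an $\ee$-net for the slice removed at that stage. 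Items (iii) and (iv) are the mechanism that actually produces the Borel selectors; ``coding the derivation by a real'' is not a substitute for them, and these definability considerations are precisely what the author singles out as the new content relative to earlier selection theorems.
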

The proof of Theorem \ref{t1} is based on a selection result which is, perhaps, of
independent interest. To state it, let $H=[-1,1]^\nn$ equipped with the product topology.
That is, $H$ is the closed unit ball of $\ell_\infty$ with the weak* topology. A subset
$S$ of $H$ will be called norm separable if it is separable with respect to the metric induced
by the supremum norm $\|\cdot\|_\infty$. The selection result we need is the following.
\begin{prop}
\label{t2} Let $Z$ be a standard Borel space and
$A\subseteq Z\times H$ Borel such that the following hold.
\begin{enumerate}
\item[(1)] For every $z\in Z$, the section $A_z$ is non-empty and compact.
\item[(2)] For every $z\in Z$, the section $A_z$ is norm separable.
\end{enumerate}
Then there exists a sequence $(f_n)$ of Borel selectors of $A$ such that
for all $z\in Z$ the sequence $\big(f_n(z)\big)$ is norm dense in $A_z$.
\end{prop}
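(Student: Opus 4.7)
The plan is to reduce Proposition~\ref{t2} to a family of ``approximate'' selection statements indexed by $\epsilon>0$, and then to exhaust $A$ via a Szlenk-type derivation whose transfinite length is controlled by a standard $\Pi^1_1$-boundedness argument. The reduction is: it suffices to show that for every rational $\epsilon>0$ there is a countable family $(g_n^\epsilon)_n$ of Borel selectors of $A$ such that the $\|\cdot\|_\infty$-balls of radius $\epsilon$ around the $g_n^\epsilon(z)$ cover $A_z$ for every $z\in Z$; enumerating the $g_n^{1/k}$ as $k,n$ range over $\nn$ then produces the required norm-dense family $(f_n)$.

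Fix $\epsilon>0$ and define a Szlenk-type derivation on Borel subsets $B\subseteq Z\times H$ with weak*-compact sections by
$$D_\epsilon(B)=\bigl\{(z,x)\in B:\diam_{\|\cdot\|_\infty}(B_z\cap V)\geq\epsilon\text{ for every basic weak*-open }V\ni x\bigr\},$$
iterated as $A^0=A$, $A^{\alpha+1}=D_\epsilon(A^\alpha)$, and $A^\lambda=\bigcap_{\alpha<\lambda}A^\alpha$ at limits. Since $\|\cdot\|_\infty$ is weak*-lower semicontinuous on $H$, each $A^\alpha_z$ remains weak*-closed (hence weak*-compact), and for every basic weak*-open $V$ the map $z\mapsto\diam_{\|\cdot\|_\infty}(A^\alpha_z\cap V)$ is Borel by an Arsenin--Kunugui-type argument on the $\sigma$-compact sections; hence $A^\alpha$ is Borel for every $\alpha<\omega_1$.

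Termination at each individual $z$ is a Baire category argument in the compact Hausdorff space $A_z$: if $L\subseteq A_z$ is non-empty, weak*-closed and satisfies $D_\epsilon(L)=L$, a countable norm-dense set $\{d_n\}\subseteq L$ produces a cover of $L$ by the weak*-closed sets $L_n=\{x\in L:\|x-d_n\|_\infty\leq\epsilon/4\}$, and Baire category gives some $L_n$ with non-empty weak*-interior, contradicting $D_\epsilon(L)=L$. The associated derivation rank $(z,x)\mapsto\min\{\alpha:(z,x)\notin A^\alpha\}$ is shown to be a $\Pi^1_1$-rank on $A$ via the standard coding by well-founded trees, and $\Pi^1_1$-boundedness applied to the Borel set $A$ then yields a single countable ordinal $\alpha^*$ with $A^{\alpha^*}=\emptyset$. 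For each $\beta<\alpha^*$ and each basic weak*-open $V$ the Borel set
$$B_{\beta,V}=\bigl\{(z,x)\in A^\beta:x\in V,\ \diam_{\|\cdot\|_\infty}(A^\beta_z\cap V)<\epsilon\bigr\}$$
has $\sigma$-compact sections, so admits a Borel selector by Arsenin--Kunugui, extended arbitrarily to a Borel selector of $A$. Any $x\in A_z$ lies in some $A^\beta_z\setminus A^{\beta+1}_z$, hence in some $B_{\beta,V}$, so the corresponding selector value is within norm $\epsilon$ of $x$, as required.

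The main obstacle is the boundedness step: verifying that the derivation rank is genuinely $\Pi^1_1$ on the Borel set $A$, so that one can replace the individual countable ordinals $\alpha_z$ by a single countable ordinal $\alpha^*$ valid uniformly over $Z$. This is the familiar effective coding of transfinite derivations on Borel sets with weak*-compact sections, analogous to Bossard's treatment of the Szlenk index, but requires careful bookkeeping of the Borel definability of each stage of the iteration.
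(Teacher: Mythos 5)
Your proposal follows essentially the same route as the paper: the same slicing derivative (removing basic weak*-open sets on which the section has norm diameter $<\ee$), termination via norm separability, $\PB^1_1$-boundedness of the associated derivation rank to obtain a single countable ordinal bounding the derivation length uniformly in $z$, and Arsenin--Kunugui selectors on each difference set $A^\beta\setminus A^{\beta+1}$. The only cosmetic differences are that you prove termination by a Baire-category argument on a fixed point of the derivation where the paper's Lemma \ref{l1} uses a Cantor-scheme (perfect set) argument, and that the paper gets the rank and its boundedness for free by factoring through the Borel map $z\mapsto A_z\in K(H)$ and citing the general theorem on Borel derivatives on $K(X)$, rather than coding a $\PB^1_1$-rank directly on $A$ as you propose.
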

As usual, a map $f:Z\to H$ is said to be a \textit{Borel selector} of $A$ if $f$ is a Borel
map such that $\big(z,f(z)\big)\in A$ for every $z\in Z$. The proof of Proposition \ref{t2}
is based on a Szlenk type index defined on all norm-separable compact subsets of $H$.
Actually, what we use is the fact that this index has nice definability properties
(it is a co-analytic rank) and it satisfies boundedness. We should remark that the use
of boundedness in selection theorems is common in descriptive set theory (it is used,
for instance, in the proof of the strategic uniformization theorem -- see
\cite[Theorem 35.32]{Kechris}). We also notice that the transfinite manipulations
made in the proof of Proposition \ref{t2} are similar to the ones in the selection
theorems of J. Jayne and A. Rogers \cite{JR} as well as of N. Ghoussoub, B. Maurey
and W. Schachermayer \cite{GMS}. We point out, however, that the crucial definability
considerations in the proof of Proposition \ref{t2} do not appear in \cite{JR} and \cite{GMS}.

\subsection{Notation} We let $\nn=\{0,1,2,...\}$. For every Polish space $X$, by $K(X)$ we denote
the set of all compact subsets of $X$ (the empty set is included). We equip $K(X)$ with the
Vietoris topology $\tau_V$, i.e. the one generated by the sets
\[ \{K\in K(X): K\cap U\neq \varnothing\} \ \text{ and } \ \{K\in K(X): K\subseteq U\}\]
where $U$ ranges over all non-empty open subsets of $X$. It is well-known
(see \cite{Kechris}) that the space $(K(X),\tau_V)$ is Polish.
A map $D:K(X)\to K(X)$ is said to be a \textit{derivative} on $K(X)$ provided that
$D(K)\subseteq K$ and $D(K_1)\subseteq D(K_2)$ if $K_1\subseteq K_2$. For every
$K\in K(X)$, by transfinite recursion one defines the \textit{iterated derivatives}
$D^{(\xi)}(K)$ of $K$ by the rule
\[ D^{(0)}(K)=K, \ D^{(\xi+1)}(K)=D\big( D^{(\xi)}(K)\big) \text{ and }
D^{(\lambda)}=\bigcap_{\xi<\lambda} D^{(\xi)}(K) \text{ if } \lambda \text{ is limit}.\]
The \textit{$D$-rank} of $K$ is the least ordinal $\xi$ for which $D^{(\xi)}(K)= D^{(\xi+1)}(K)$.
It is denoted by $|K|_D$. Moreover, we set $D^{(\infty)}(K)=D^{|K|_D}(K)$. If $X,Y$
are sets and $A\subseteq X\times Y$, then for every $x\in X$ by $A_x$ we denote the
section of $A$ at $x$, i.e. the set $\{y:(x,y)\in A\}$. All the other pieces of
notation we use are standard (see for instance \cite{Kechris} or \cite{LT}).

\subsection{The counterexample to question (Q1)} We have already
mention that the counterexample is the isomorphic class of $\ell_1$, that is
the set $A=\{Y: Y\cong \ell_1\}$. As the equivalence relation of isomorphism
$\cong$ is analytic in $\sbs\times\sbs$ (see \cite{Bos}), the set $A$ is analytic.
We will show that the set $A_*=\{ X: \exists Y\in A \text{ with } X^*\cong Y\}=
\{X: X^*\cong\ell_1\}$ is not analytic. The argument below goes back to the
fundamental work of J. Bourgain on $C(K)$ spaces, with $K$ countable compact
(see \cite{Bou1}). Specifically, there exists a Borel map $\Phi:K(2^\nn)\to\sbs$
such that for all $K\in K(2^\nn)$ the space $\Phi(K)$ is isomorphic to $C(K)$
(see \cite{Kechris}, page 263). Denote by $K_\omega(2^\nn)$ the set of all
countable compact subsets of $2^\nn$. It follows that
\[ K\in K_\omega(2^\nn) \Leftrightarrow C(K)^*\cong \ell_1 \Leftrightarrow
\Phi(K)\in A_*. \]
Hence $K_\omega(2^\nn)=\Phi^{-1}(A_*)$. By a classical result of Hurewicz
(see \cite[Theorem 27.5]{Kechris}), the set $K_\omega(2^\nn)$ is
co-analytic non-Borel and so the set $A_*$ is not analytic (for if not, we would
have that $K_\omega(2^\nn)$ is analytic). In descriptive set-theoretic terms,
the above argument shows that the set $A_*$ is Borel $\PB^1_1$-hard.

%--------------Proof of Theorem 2-------------%

\section{Proof of Proposition \ref{t2}}

In what follows, by $H$ we shall denote the set $[-1,1]^\nn$ equipped with the product topology.
Let us recall the following well-known topological lemma (see \cite{GM} or \cite{Ro} and the
references therein). For the sake of completeness we include a proof.
\begin{lem}
\label{l1} Let $K\subseteq H$ non-empty compact. If $K$ is norm separable, then
for every $\ee>0$ there exists $U\subseteq H$ open such that $K\cap U\neq \varnothing$
and $\|\cdot\|_{\infty}-\diam(K\cap U)\leq\ee$.
\end{lem}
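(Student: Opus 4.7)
The plan is to use a Baire category argument, exploiting the key topological observation that closed balls in the $\|\cdot\|_\infty$-norm are also closed in the product topology of $H$.

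First I would fix $\ee>0$ and pick a countable $\|\cdot\|_\infty$-dense sequence $(x_n)$ in $K$, which exists by the norm-separability assumption. Setting $r=\ee/2$, the family of norm-balls $\{B_n : n\in\nn\}$ with $B_n=\{y\in H:\|y-x_n\|_\infty\le r\}$ covers $K$. The central observation is that each $B_n$ is closed in the product topology on $H$, since
\[ B_n=\bigcap_{i\in\nn}\{y\in H:|y(i)-x_n(i)|\le r\},\]
an intersection of product-closed slabs. Hence each $F_n:=K\cap B_n$ is closed in the compact space $K$ (with its product topology), and $K=\bigcup_n F_n$.

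Next I would invoke the Baire category theorem: the compact Hausdorff space $K$ is a Baire space, so the countable closed cover $(F_n)$ cannot consist entirely of nowhere-dense sets. Pick $n$ and a product-open set $U\subseteq H$ with $\varnothing\ne U\cap K\subseteq F_n$. By construction any two points of $U\cap K$ lie in $B_n$, so their $\|\cdot\|_\infty$-distance is at most $2r=\ee$. This $U$ witnesses the conclusion.

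The only real subtlety is the first observation that norm-closed balls of $\ell_\infty$ remain closed in the product topology of $H=[-1,1]^\nn$; once that is in hand, Baire category does the rest, and the argument has nothing to do with the specific geometry of $\ell_\infty$ beyond this closedness. I do not anticipate a serious obstacle: the proof is essentially a short two-step Baire category argument once the $\ell_\infty$-balls are identified as product-closed.
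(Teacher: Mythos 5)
Your proof is correct, but it takes a genuinely different route from the paper's. The paper argues by contradiction: assuming no such $U$ exists, it builds a Cantor scheme $(V_t)_{t\in 2^{<\nn}}$ of relatively open subsets of $K$ whose branches produce a perfect subset $P\subseteq K$ that is $\ee$-separated in norm, contradicting norm separability. You instead give a direct Baire category argument, whose only non-trivial ingredient is the observation that $\|\cdot\|_\infty$-closed balls are closed in the product topology of $H$ (being intersections of coordinatewise slabs, i.e.\ the norm distance to a fixed point is lower semicontinuous for the product topology); then covering $K$ by countably many such balls of radius $\ee/2$ centered at a norm-dense sequence and applying Baire in the compact space $K$ yields a ball with non-empty relative interior, which gives the desired $U$. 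All steps check out: the balls do cover $K$, each $F_n=K\cap B_n$ is relatively closed, $K$ is a Baire space, and the triangle inequality bounds the diameter of $K\cap U$ by $\ee$. Your argument is shorter and avoids the tree bookkeeping; the paper's argument gives a slightly stronger dichotomy (failure of the conclusion produces a perfect norm-$\ee$-separated subset), though that extra information is never used. Both proofs are classical; yours is essentially the Namioka-style point-of-continuity argument found in the sources the paper cites for this lemma.
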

\begin{proof}
We fix a compatible metric $\rho$ for $H$ with $\rho-\mathrm{diam}(H)\leq 1$ (notice that such a metric
$\rho$ is necessarily complete). Assume, towards a contradiction, that the lemma is false. Hence,
we can construct a family $(V_t)$ $(t\in 2^{<\nn})$ of non-empty relatively open subsets of $K$
such that the following are satisfied.
\begin{enumerate}
\item[(a)] For every $t\in 2^{<\nn}$ we have
$\overline{V}_{t^{\con}0}\cap \overline{V}_{t^{\con}1}=\varnothing$,
$\big(\overline{V}_{t^{\con}0}\cup \overline{V}_{t^{\con}1}\big)\subseteq V_t$ and
$\rho-\diam(V_t)\leq 2^{-|t|}$.
\item[(b)] For every $n\in\nn$ with $n\geq 1$, every $t,s\in 2^n$ with $t\neq s$ and every pair
$(f,g)\in V_t\times V_s$ we have $\|f-g\|_\infty>\ee$.
\end{enumerate}
We set $P=\bigcup_{\sigma\in 2^\nn} \bigcap_{n\in\nn} V_{\sigma|n}$.
By (a) above, $P$ is a perfect subset of $K$. On the other hand, by (b), we see that
$\|f-g\|_\infty>\ee$ for every  $f,g\in P$ with $f\neq g$. That is, $K$ is not norm separable,
a contradiction. The proof is completed.
\end{proof}
Lemma \ref{l1} suggests a canonical derivative operation on compact subsets
of $H$, similar to the derivative operation appearing in W. Szlenk's analysis
of separable dual spaces \cite{Sz}. Actually, our interest on it stems from
the fact that it has the right definability properties.

To define this derivative, let $(U_n)$ be an enumeration of a countable basis of $H$
(we will assume that every $U_n$ is non-empty). This basis will be fixed.
Let $\ee>0$ be arbitrary. Define $D_{n,\ee}:K(H)\to K(H)$ by
\[ D_{n,\ee}(K)= \left\{ \begin{array}
{r@{\quad:\quad}l} K\setminus U_n & \text{if } K\cap U_n\neq \varnothing \text{ and }
\|\cdot\|_{\infty}-\diam(K\cap U_n)\leq\ee, \\
K & \mbox{otherwise.} \end{array} \right. \]
Notice that $D_{n,\ee}$ is a derivative on $K(H)$. Now define $\mathbb{D}_\ee:K(H)\to K(H)$
by $\mathbb{D}_\ee(K)=\bigcap_n D_{n,\ee}(K)$. That is
\[ \mathbb{D}_\ee(K)=K \setminus \bigcup\{ U_n: K\cap U_n\neq\varnothing \text{ and }
\|\cdot\|_{\infty}-\diam(K\cap U_n)\leq\ee\} .\]
Clearly $\mathbb{D}_\ee$ is derivative on $K(H)$ too.
\begin{lem}
\label{l2} Let $\ee>0$. Then the following hold.
\begin{enumerate}
\item[(i)] For every $n\in\nn$, the map $D_{n,\ee}$ is Borel.
\item[(ii)] The map $\mathbb{D}_{\ee}$ is a Borel derivative.
\end{enumerate}
\end{lem}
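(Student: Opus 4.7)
The plan is to show both statements by directly verifying that the preimage of each Vietoris subbasic open is Borel in $K(H)$, using the explicit formulas for $D_{n,\ee}$ and $\mathbb{D}_\ee$.

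For part (i), the map $D_{n,\ee}$ is a dichotomy between $K \mapsto K\setminus U_n$ and $K \mapsto K$ governed by
\[ E_{n,\ee} = \{K \in K(H) : K \cap U_n \neq \varnothing \text{ and } \|\cdot\|_{\infty}-\diam(K\cap U_n) \leq \ee\}. \]
The first clause is Vietoris-open. For the diameter clause, $\|\cdot\|_{\infty}-\diam(K\cap U_n) > \ee$ holds iff there exist basic open sets $V_a, V_b \subseteq H$ (from a fixed countable basis of $H$) and an index $i \in \nn$ such that every $(v,w) \in V_a \times V_b$ satisfies $|v_i - w_i| > \ee$, together with $K \cap (U_n \cap V_a) \neq \varnothing$ and $K \cap (U_n \cap V_b) \neq \varnothing$; this is a countable union of Vietoris-open conditions, so $E_{n,\ee}$ is Borel (in fact $G_\delta$). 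On $E_{n,\ee}$ we have $D_{n,\ee}(K) = K \cap F_n$, where $F_n = H \setminus U_n$ is closed; the operation $K \mapsto K \cap F_n$ is itself Borel because, for any open $V \subseteq H$, $F_n \cap V$ is $F_\sigma$ (hence the preimage of $\{L : L \cap V \neq \varnothing\}$ is a countable union of Vietoris-closed conditions on $K$) while $F_n \setminus V$ is closed (so the preimage of $\{L : L \subseteq V\}$ is Vietoris-open). Off $E_{n,\ee}$, the map is the identity. Pasting the two branches along the Borel set $E_{n,\ee}$ yields Borelness of $D_{n,\ee}$.

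For part (ii), I would argue via the graph. Starting from
\[ \mathbb{D}_\ee(K) = K \setminus \bigcup \{ U_n : K \in E_{n,\ee}\}, \]
the graph
\[ \Gamma = \{(K,f) \in K(H) \times H : f \in \mathbb{D}_\ee(K)\} = \{(K,f) : f \in K \text{ and } \forall n\, (K \in E_{n,\ee} \Rightarrow f \notin U_n)\} \]
is Borel, since $\{(K,f) : f \in K\}$ is closed in $K(H) \times H$, each $E_{n,\ee}$ is Borel in $K$, and each condition $f \notin U_n$ is closed in $f$. For a subbasic Vietoris open $\{L : L \cap V \neq \varnothing\}$ with $V$ open in $H$, the preimage $\{K : \mathbb{D}_\ee(K) \cap V \neq \varnothing\}$ is the $K$-projection of the Borel set $\Gamma \cap (K(H) \times V)$, whose $K$-sections are open subsets of the compact set $\mathbb{D}_\ee(K) \subseteq H$ and hence $K_\sigma$. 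By the Arsenin--Kunugui projection theorem, this projection is Borel. For the other subbasic open $\{L : L \subseteq V\}$, the complement of its preimage is the $K$-projection of $\Gamma \cap (K(H) \times (H \setminus V))$, whose $K$-sections are compact, and the same projection theorem applies.

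The main obstacle is the diameter argument in (i): one has to translate the $\|\cdot\|_{\infty}$-diameter condition, which involves the much finer supremum norm, into a countable Boolean combination of conditions involving only product-topology basic opens and Vietoris conditions on $K$. Once this is in hand, part (ii) follows mechanically from the Borelness of each $E_{n,\ee}$ combined with a standard projection theorem for Borel sets with $K_\sigma$ sections.
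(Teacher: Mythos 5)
Your proof is correct, and it splits naturally into a part that matches the paper and a part that does not. For (i) you take essentially the paper's route: isolate the Borel set $E_{n,\ee}$ (the paper's $A_n$) and paste the two branches $K\mapsto K\setminus U_n$ and $K\mapsto K$ along it. The only differences are cosmetic: the paper exhibits the complement of $A_n$ as a $K_\sigma$ by quantifying over pairs $f,g\in K\cap U_n$ and coordinates $l$, whereas you witness a large diameter by a separating pair of basic open sets (both are valid), and you spell out why $K\mapsto K\cap F_n$ is Borel on the Vietoris subbasis, a step the paper dismisses with ``this easily implies''. For (ii) your route is genuinely different. The paper factors $\mathbb{D}_\ee=\bigcap\circ F\circ I$, where $I$ is the diagonal embedding of $K(H)$ into $K(H)^\nn$, $F$ applies the $D_{n,\ee}$ coordinatewise, and $\bigcap:K(H)^\nn\to K(H)$ is the countable-intersection map, Borel by \cite[Lemma 34.11]{Kechris}; this reduces (ii) entirely to (i) plus a standard lemma and uses no uniformization-type theorem. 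You instead prove that the graph of $\mathbb{D}_\ee$ is Borel and recover Borelness of the map by projecting the graph intersected with $K(H)\times V$ (resp.\ $K(H)\times(H\setminus V)$), invoking Arsenin--Kunugui for Borel sets with $\sigma$-compact (resp.\ compact) sections. Both arguments are sound; the paper's is lighter machinery, while yours bypasses the representation of $\mathbb{D}_\ee$ as $\bigcap_n D_{n,\ee}$ and would apply verbatim to any compact-valued derivative with Borel graph (and the reliance on Arsenin--Kunugui costs nothing here, since the paper already uses that theorem in Lemma \ref{l3}). One small remark applying equally to you and to the paper: the statement of (ii) also asserts that $\mathbb{D}_\ee$ is a \emph{derivative}; that is verified where $\mathbb{D}_\ee$ is defined, not in the proof, so its absence from your argument is not a gap.
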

\begin{proof}
(i) Fix $n\in\nn$. Let
\[ A_n=\{ K\in K(H): K\cap U_n\neq\varnothing \text{ and }
\|\cdot\|_{\infty}-\diam(K\cap U_n)\leq\ee\}.\]
Then $A_n$ is Borel (actually it is the complement of a $K_\sigma$ set) in $K(H)$, as
\begin{eqnarray*}
K\notin A_n & \Leftrightarrow & (K\cap U_n=\varnothing) \text { or } \big(\exists f,g\in K\cap U_n\\
& & \exists l\in\nn \ \exists m\in\nn \text{ with } |f(l)-g(l)|\geq \ee+\frac{1}{m+1}\big).
\end{eqnarray*}
Now observe that $D_{n,\ee}(K)=K$ if $K\notin A_n$ and $D_{n,\ee}(K)=K\setminus U_n$ if
$K\in A_n$. This easily implies that $D_{n,\ee}$ is Borel. \\
(ii) Consider the map $F:K(H)^\nn\to K(H)^\nn$ defined by
\[ F\big( (K_n)\big)=\big( D_{n,\ee}(K_n)\big). \]
By part (i), we have that $F$ is Borel. Moreover, by \cite[Lemma 34.11]{Kechris},
the map $\bigcap: K(H)^\nn\to K(H)$ defined by $\bigcap\big((K_n)\big)=\bigcap_n K_n$
is Borel too. Finally, let $I:K(H)\to K(H)^\nn$ be defined by $I(K)=(K_n)$ with
$K_n=K$ for every $n$. Clearly $I$ is continuous. As $\mathbb{D}_{\ee}(K)=
\bigcap\big(F(I(K))\big)$, the result follows.
\end{proof}
We will need the following well-known result concerning sets in product spaces
with compact sections (see \cite[Theorem 28.8]{Kechris}).
\begin{thm}
\label{tc} Let $Z$ be a standard Borel space, $H$ a Polish space and $A\subseteq
Z\times H$ with compact sections. Let $\Phi_A:Z\to K(H)$ be defined by $\Phi_A(z)=A_z$
for all $z\in Z$. Then $A$ is Borel in $Z\times H$ if and only if $\Phi_A$
is a Borel map.
\end{thm}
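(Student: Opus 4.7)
The plan is to establish the two directions of the equivalence separately; the reverse implication is essentially formal, while the forward one rests on classical projection theorems for Borel sets with small sections. For the reverse direction, assume $\Phi_A$ is Borel. I would first record the standard fact that the membership relation $E=\{(K,x)\in K(H)\times H: x\in K\}$ is closed in $(K(H),\tau_V)\times H$: if $x\notin K$, then compactness of $K$ together with regularity of $H$ produces disjoint open sets separating $x$ from $K$, and the associated Vietoris basic neighborhood of $K$ together with the open neighborhood of $x$ yield a product avoiding $E$. The map $\Psi:Z\times H\to K(H)\times H$ given by $\Psi(z,x)=(\Phi_A(z),x)$ is then Borel, and $A=\Psi^{-1}(E)$, which is therefore Borel.

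For the forward direction, assume $A$ is Borel with compact sections. Fix a countable basis $(U_n)$ of $H$. The Borel $\sigma$-algebra of $(K(H),\tau_V)$ is generated by the subbasic sets $V_n^+=\{K\in K(H): K\cap U_n\neq\varnothing\}$ and $V_n^-=\{K\in K(H): K\subseteq U_n\}$, so it suffices to check that $\Phi_A^{-1}(V_n^+)$ and $\Phi_A^{-1}(V_n^-)$ are Borel in $Z$ for every $n$. The first set equals $\{z: A_z\cap U_n\neq\varnothing\}$, which is the projection to $Z$ of the Borel set $A\cap (Z\times U_n)$; its $z$-sections are open subsets of the compact sets $A_z$, hence $K_\sigma$ (since open subsets of a Polish space are $F_\sigma$, and $F_\sigma$ inside a compact set is $K_\sigma$), and the Arsenin--Kunugui theorem on projections of Borel sets with $K_\sigma$ sections gives that this projection is Borel. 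The second set equals $Z\setminus \{z: A_z\cap (H\setminus U_n)\neq\varnothing\}$, and here $A\cap(Z\times (H\setminus U_n))$ is a Borel set with compact sections (intersection of compact and closed), so its projection is Borel by the classical compact-section projection theorem (Novikov); taking complements finishes the argument.

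The only non-routine step is the invocation of the projection theorems of Arsenin--Kunugui and Novikov; once these are granted, the proof reduces to the bookkeeping above together with the elementary verification that $E$ is closed in $K(H)\times H$. I expect the main conceptual obstacle, were one to prove the compact-section projection theorem from scratch, to lie there rather than in anything specific to the setup of this paper, which is why I simply cite it as a black box.
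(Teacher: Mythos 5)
The paper does not prove this statement at all: it is quoted verbatim from Kechris's book (Theorem 28.8 there) and used as a black box, so there is no ``paper proof'' to compare against. Your argument is correct and is essentially the standard one. The reverse direction via the closedness of the membership relation $E=\{(K,x):x\in K\}$ in $K(H)\times H$ is fine, and the forward direction correctly reduces to the two projection computations, with the compact-section projection theorem handling $\{z:A_z\subseteq U\}$ and Arsenin--Kunugui (or, if you prefer to use only the compact-section theorem, the observation that $A_z\cap U_n\neq\varnothing$ iff $A_z\cap\overline{V}\neq\varnothing$ for some basic $V$ with $\overline{V}\subseteq U_n$) handling $\{z:A_z\cap U_n\neq\varnothing\}$. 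One small imprecision: the sets $V_n^+=\{K:K\cap U_n\neq\varnothing\}$ and $V_n^-=\{K:K\subseteq U_n\}$ with $U_n$ ranging over a countable basis do not literally form a subbasis of the Vietoris topology --- the basic Vietoris open sets involve $\{K:K\subseteq U_{n_1}\cup\cdots\cup U_{n_k}\}$, and $K\subseteq U_1\cup U_2$ does not decompose into $K\subseteq U_1$ or $K\subseteq U_2$. This costs you nothing, since your argument for $V_n^-$ uses only that the complement of $U_n$ is closed and hence applies verbatim to any finite union of basic sets (indeed to any open $U$); but the reduction ``it suffices to check $V_n^+$ and $V_n^-$'' should either be stated for these finite unions or be justified by noting that the $\sigma$-algebra generated by the $V_n^{\pm}$ is the full Borel $\sigma$-algebra of $K(H)$, which itself requires a short compactness argument.
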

Now let $B\subseteq H$ and $\ee>0$. We say that a subset $S$ of $B$ is
\textit{norm $\ee$-dense} in $B$ if for every $g\in B$ there exists $f\in S$
with $\|f-g\|_{\infty}\leq\ee$.
\begin{lem}
\label{l3} Let $Z$ and $A$ be as in the statement of Proposition \ref{t2}.
Let also $\ee>0$ and $\tilde{A}\subseteq Z\times H$ Borel with
$\tilde{A}\subseteq A$ and such that for every $z\in Z$ the section
$\tilde{A}_z$ is a (possibly empty) compact set. Then there exists a
sequence $(f_n)$ of Borel selectors of $A$ such that for all $z\in Z$,
if the section $\tilde{A}_z$ is non-empty, then the set $\{ f_n(z): f_n(z)\in
\tilde{A}_z\setminus\mathbb{D}_\ee(\tilde{A}_z)\}$ is non-empty and norm
$\ee$-dense in $\tilde{A}_z\setminus \mathbb{D}_\ee(\tilde{A}_z)$.
\end{lem}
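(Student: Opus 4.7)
The plan is to build, for each $n\in\nn$, a Borel selector $f_n$ of $A$ such that $f_n(z)\in \tilde{A}_z\cap U_n$ whenever $z$ lies in the set
\[ B_n = \{ z \in Z : \tilde{A}_z\cap U_n\neq \varnothing \text{ and } \|\cdot\|_{\infty}-\diam(\tilde{A}_z\cap U_n)\leq\ee\}. \]
That $B_n$ is Borel follows from Theorem \ref{tc} applied to $\tilde{A}$, together with the observation, extracted verbatim from the proof of Lemma \ref{l2}(i), that the analogous set $A_n\subseteq K(H)$ is Borel in $K(H)$; indeed $B_n = \Phi_{\tilde{A}}^{-1}(A_n)$.

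The core step is to select $f_n(z)$ on $B_n$. Since $\tilde{A}_z\cap U_n$ is only relatively open in the compact section $\tilde{A}_z$, one cannot apply compact-valued selection directly. We fix instead, for each $n$, a countable family $(V_{n,k})_{k\in\nn}$ of basic open subsets of $H$ with $\overline{V_{n,k}}\subseteq U_n$ which together cover $U_n$; such a family exists because $H$ is compact metrizable. For $z\in B_n$ some $V_{n,k}$ meets $\tilde{A}_z$, so the least such $k$, call it $k_n(z)$, is well-defined and is a Borel function of $z$, because for each fixed $k$ the condition ``$\tilde{A}_z\cap V_{n,k}\neq\varnothing$'' is Borel by Theorem \ref{tc}. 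Partitioning $B_n$ by the values of $k_n$ reduces the task to Borel selection from the map $z\mapsto \tilde{A}_z\cap \overline{V_{n,k}}$, which is Borel with non-empty compact values, and this is handled by the standard compact-valued Borel selection theorem (see \cite[Theorem 12.13]{Kechris}). The resulting Borel $g_n:B_n\to H$ satisfies $g_n(z)\in \tilde{A}_z\cap\overline{V_{n,k_n(z)}}\subseteq \tilde{A}_z\cap U_n$. Extending $g_n$ to all of $Z$ via any fixed Borel selector of $A$ (which exists by the same theorem applied to the Borel compact-valued map $z\mapsto A_z$) produces the required $f_n$.

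To conclude, assume $\tilde{A}_z\neq\varnothing$. By Lemma \ref{l1} applied to $\tilde{A}_z$, some $n$ witnesses $z\in B_n$, so $f_n(z)\in \tilde{A}_z\cap U_n\subseteq \tilde{A}_z\setminus \mathbb{D}_{\ee}(\tilde{A}_z)$, giving non-emptiness. For density, let $g\in \tilde{A}_z\setminus \mathbb{D}_{\ee}(\tilde{A}_z)$; by the very definition of $\mathbb{D}_{\ee}$, there is some $n$ with $g\in U_n$ and $z\in B_n$, and then both $g$ and $f_n(z)$ lie in $\tilde{A}_z\cap U_n$, a set of $\|\cdot\|_{\infty}$-diameter at most $\ee$, so $\|f_n(z)-g\|_\infty\leq\ee$. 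The main obstacle in this argument is precisely the reduction of Borel selection over the ``open-in-section'' set $\tilde{A}\cap(Z\times U_n)$ to a compact-valued problem; exhausting $U_n$ by basic open sets with closures inside $U_n$ sidesteps the need to invoke a heavier tool such as the Arsenin--Kunugui theorem for $K_\sigma$-sectioned Borel sets.
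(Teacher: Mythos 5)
Your proof is correct, and its overall skeleton coincides with the paper's: the same Borel sets $B_n$ (called $Z_n$ there, shown Borel via $\Phi_{\tilde{A}}^{-1}(A_n)$ exactly as you do), the same case split between $z\in B_n$ and $z\notin B_n$, and the same concluding non-emptiness and $\ee$-density argument from the definition of $\mathbb{D}_\ee$ together with Lemma \ref{l1}. Where you genuinely diverge is in the selection step. The paper forms, for each $n$, a single Borel set $\tilde{A}_n$ whose section over $z\in Z_n$ is $\tilde{A}_z\cap U_n$ and over $z\notin Z_n$ is $A_z$; since $\tilde{A}_z\cap U_n$ is relatively open in a compact metric space, these sections are non-empty and $\sigma$-compact, and a Borel selector is produced in one stroke by the Arsenin--Kunugui theorem \cite[Theorem 35.46]{Kechris}. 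You instead exhaust $U_n$ by basic open sets $V_{n,k}$ with $\overline{V_{n,k}}\subseteq U_n$, take the least $k$ with $\tilde{A}_z\cap V_{n,k}\neq\varnothing$ (Borel in $z$ by Theorem \ref{tc}, since $\{K:K\cap V\neq\varnothing\}$ is Vietoris-open), and then select from the non-empty compact-valued Borel map $z\mapsto\tilde{A}_z\cap\overline{V_{n,k}}$ via Kuratowski--Ryll-Nardzewski. This buys a more elementary proof --- compact-valued selection in place of the considerably deeper Arsenin--Kunugui uniformization, which is precisely the ``heavier tool'' you name --- at the cost of one extra countable Borel partition of each $B_n$. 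Both routes are complete; your observation that $f_n(z)\in\tilde{A}_z\cap U_n\subseteq\tilde{A}_z\setminus\mathbb{D}_\ee(\tilde{A}_z)$ for $z\in B_n$, combined with the diameter bound on $\tilde{A}_z\cap U_n$, delivers exactly what the lemma asserts.
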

\begin{proof}
Let $n\in\nn$. By Theorem \ref{tc}, the map $\Phi_{\tilde{A}}$ is Borel. Let $Z_n=\{ z\in Z:
\tilde{A}_z\cap U_n\neq\varnothing \text{ and } \|\cdot\|_{\infty}-\diam(\tilde{A}_z\cap U_n)
\leq\ee\}$. Then $Z_n$ is Borel in $Z$. To see this, notice that $Z_n=\Phi^{-1}_{\tilde{A}}(A_n)$,
where $A_n$ is defined in the proof of Lemma \ref{l2}(i). Now let $\tilde{A}_n\subseteq Z\times H$
be defined by the rule
\begin{eqnarray*}
(z,f)\in \tilde{A}_n & \Leftrightarrow & \big( z\in Z_n \text{ and } f\in U_n \text{ and }
(z,f)\in \tilde{A} \big) \text{ or } \\ & & \big( z\notin Z_n \text{ and } (z,f)\in A \big).
\end{eqnarray*}
It is easy to see that for every $n\in\nn$, the set $\tilde{A}_n$ is a
Borel set with non-empty $\sigma$-compact sections. By the Arsenin-Kunugui
theorem (see \cite[Theorem 35.46]{Kechris}), there exists a Borel map
$f_n:Z\to H$ such that $\big(z,f_n(z)\big)\in \tilde{A}_n$ for all
$z\in Z$. We claim that the sequence $(f_n)$ is the desired one. Clearly
it is a sequence of Borel selectors of $A$. What remains is to check that it has the desired
property. So, let $z\in Z$ such that $\tilde{A}_z$ is non-empty and let $f\in \tilde{A}_z\setminus
\mathbb{D}_\ee(\tilde{A}_z)$. It follows readily by the definition of $\mathbb{D}_\ee$ that there
exists $n_0\in\nn$ such that $z\in Z_{n_0}$ and $(z,f)\in\tilde{A}_{n_0}$. The definition
of $\tilde{A}_{n_0}$ yields that the set $\{h:(z,h)\in\tilde{A}_{n_0}\}$ has norm diameter
less or equal to $\ee$. As $(z,f_{n_0}(z))\in\tilde{A}_{n_0}$, we conclude that
$\|f-f_{n_0}(z)\|_{\infty}\leq\ee$ and the proof is completed.
\end{proof}
Before we proceed to the proof of Proposition \ref{t2}, we need the following facts
about the derivative operation $\mathbb{D}_\ee$ described above. By Lemma \ref{l2},
the map $\mathbb{D}_\ee$ is a Borel derivative on $K(H)$. It follows
by \cite[Theorem 34.10]{Kechris} that the set
\[ \Omega_{\mathbb{D}_\ee}=\{ K\in K(H): \mathbb{D}_\ee^{(\infty)}(K)=\varnothing \}\]
is co-analytic and that the map $K\to |K|_{\mathbb{D}_\ee}$ is a co-analytic rank on
$\Omega_{\mathbb{D}_\ee}$ (a $\PB^1_1$-rank in the technical logical jargon -- see
\cite{Kechris} for the definition and the properties of co-analytic ranks). We are
particulary interested in the following important property which
is shared by all co-analytic ranks (see \cite[Theorem 35.22]{Kechris}). If
$S$ is an analytic subset of $\Omega_{\mathbb{D}_\ee}$, then
\[ \sup\{ |K|_{\mathbb{D}_\ee}: K\in S\}<\omega_1 \]
(this property is known as boundedness). We are now ready to give the proof of
Proposition \ref{t2}.
\begin{proof}[Proof of Proposition \ref{t2}]
Let $A\subseteq Z\times H$ be as in the statement of the proposition. By Theorem \ref{tc},
the map $\Phi_A:Z\to K(H)$ defined by $\Phi_A(z)=A_z$ is Borel, and so, the set
$\{ A_z: z\in Z\}$ is an analytic subset of $K(H)$.

Now, let $\ee>0$ be arbitrary and consider the derivative operation $\mathbb{D}_\ee$.
By our assumptions on $A$ and by Lemma \ref{l1}, we see that for every
$z\in Z$ and every $\xi<\omega_1$ if $\mathbb{D}_\ee^{(\xi)}(A_z)\neq\varnothing$,
then $\mathbb{D}_\ee^{(\xi+1)}(A_z)\subsetneqq\mathbb{D}_\ee^{(\xi)}(A_z)$.
It follows that the transfinite sequence $\big( \mathbb{D}_\ee^{(\xi)}(A_z)\big)$
$(\xi<\omega_1)$ must be stabilized at $\varnothing$, and so,
$\{ A_z:z\in Z\}\subseteq\Omega_{\mathbb{D}_\ee}$. Hence, by boundedness, we get that
\[ \sup\big\{ |A_z|_{\mathbb{D}_\ee}: z\in Z\big\}=\xi_\ee<\omega_1.\]
For every $\xi<\xi_\ee$ we define recursively $A^\xi\subseteq Z\times
H$ as follows. Let $A^0=A$. If $\xi=\zeta+1$ is a successor
ordinal, define $A^\xi$ by
\[ (z,f)\in A^\xi \Leftrightarrow f\in \mathbb{D}_\ee\big( (A^\zeta)_z \big) \]
where $(A^\zeta)_z$ is the section $\{f:(z,f)\in A^\zeta\}$ of $A^\zeta$. If $\xi$ is limit,
then let
\[ (z,f)\in A^\xi \Leftrightarrow (z,f)\in\bigcap_{\zeta<\xi} A^\zeta. \]
\noindent \textsc{Claim.} \textit{The following hold.
\begin{enumerate}
\item[(1)] For every $\xi<\xi_\ee$, the set $A^\xi$ is a Borel
subset of $A$ with compact sections.
\item[(2)] For every $(z,f)\in Z\times H$ with $(z,f)\in A$,
there exists a unique ordinal $\xi<\xi_\ee$ such that $(z,f)\in
A^{\xi}\setminus A^{\xi+1}$, equivalently
$f\in(A^{\xi})_z\setminus \mathbb{D}_\ee\big( (A^\xi)_z\big)$.
\end{enumerate} }
\medskip

\noindent \textit{Proof of the claim.} (1) By induction on all
ordinals less than $\xi_\ee$. For $\xi=0$ it is straightforward.
If $\xi=\zeta+1$ is a successor ordinal, then by our inductive
hypothesis and Theorem \ref{tc}, the map  $z\mapsto (A^\zeta)_z$
is Borel. By Lemma \ref{l2}(ii), the map $z\mapsto
\mathbb{D}_\ee\big( (A^\zeta)_z\big)$ is Borel too. By the
definition of $A^{\xi}=A^{\zeta+1}$ and invoking Theorem \ref{tc}
once more, we conclude that $A^\xi$ is a Borel subset of $A$ with
compact sections. If $\xi$ is limit, then this is an
immediate consequence of our inductive hypothesis and the definition of $A^\xi$.\\
(2) For every $z\in Z$, let $\xi_z=|A_z|_{\mathbb{D}_\ee}\leq\xi_\ee$.
Notice that $A_z$ is partitioned into the disjoint sets $\mathbb{D}_\ee^{(\xi)}(A_z)
\setminus \mathbb{D}^{(\xi+1)}_\ee(A_z)$ with $\xi<\xi_z$. By transfinite induction,
one easily shows that $(A^{\xi})_z=\mathbb{D}_\ee^{(\xi)}(A_z)$ for every $\xi<\xi_z$.
It follows that
\[ \mathbb{D}_\ee^{(\xi)}(A_z)\setminus \mathbb{D}^{(\xi+1)}_\ee(A_z)=(A^\xi)_z\setminus
(A^{\xi+1})_z=(A^\xi)_z\setminus \mathbb{D}_\ee\big( (A^\xi)_z\big).\]
The claim is proved. \hfill $\lozenge$
\medskip

\noindent By part (1) of the claim, for every $\xi<\xi_\ee$ we may
apply Lemma \ref{l3} for the set $A^\xi$. Therefore, we get for all
$\xi<\xi_\ee$ a sequence $(f^\xi_n)$ of Borel selectors of $A$
as described in Lemma \ref{l3}. Enumerate the sequence
$(f^\xi_n)$ $(\xi<\xi_\ee, n\in\nn)$ in a single sequence, say as
$(f_n)$. Clearly the sequence $(f_n)$ is a sequence of Borel
selectors of $A$. Moreover, by part (2) of the above claim and the
properties of the sequence obtained by Lemma \ref{l3}, we see that
for all $z\in Z$ the set $\{f_n(z):n\in\nn\}$ is norm $\ee$-dense
in $A_z$. Applying the above for $\ee=(m+1)^{-1}$ with $m\in\nn$,
we get the result.
\end{proof}

%--------------Proof of Theorem 1-------------%

\section{Proof of Theorem \ref{t1}}

Before we embark into the proof, we need to discuss some standard
facts (see \cite{Kechris}, page 264). First of all we notice that
an application of the Kuratowski--Ryll-Nardzewski selection Theorem
(see \cite[Theorem 12.13]{Kechris}) yields that there exists a sequence
$d_n:\mathrm{SB}\to C(2^\nn)$ $(n\in\nn)$ of Borel functions such that
for every $X\in\sbs$, the sequence $\big(d_n(X)\big)$
is dense in $X$ and closed under rational linear combinations.

Using this, for every $X\in\sbs$ we can identify the closed unit ball
$B_1(X^*)$ of $X^*$ with a compact subset $K_{X^*}$ of $H=[-1,1]^\nn$.
In particular, we view every element $x^*\in B_1(X^*)$ as an element $f\in H$
by identifying it with the sequence $n\mapsto \frac{x^*(d_n(X))}
{\|d_n(X)\|}$ (if $d_n(X)=0$, then we define this ratio to be 0).
There are two crucial properties established with this identification.
\begin{enumerate}
\item[(P1)] The set $D\subseteq\sbs\times H$ defined by
\[ (X,f)\in D \Leftrightarrow f\in K_{X^*}\]
is Borel. Indeed, notice that
\begin{eqnarray*}
(X,f)\in\mathrm{D} & \Leftrightarrow & \forall n,m,k\in\nn \ \forall p,q\in\mathbb{Q} \text{ we have }\\
& & \big[ p\cdot d_n(X)+q\cdot d_m(X)=d_k(X) \Rightarrow\\
& & \ p\cdot \|d_n(X)\|\cdot f(n)+q\cdot \|d_m(X)\|\cdot f(m)=\|d_k(X)\|\cdot f(k) \big].
\end{eqnarray*}

\item[(P2)] If $f_0,...,f_k\in K_{X^*}$ and $x_0^*,...,x^*_k$ are the
corresponding elements of $B_1(X^*)$, then for every $a_0,...,a_k\in\rr$ we have
\begin{eqnarray*}
\Big\| \sum_{i=0}^k a_ix_i^*\Big\|_{X^*} & = & \sup\Big\{ \Big|
\sum_{i=0}^k a_i\frac{x_i^*(d_n(X))}{\|d_n(X)\|}\Big|: d_n(X)\neq 0\Big\}\\
& = & \sup\Big\{ \Big| \sum_{i=0}^k a_i f_i(n)\Big|: n\in\nn\Big\}
= \Big\| \sum_{i=0}^k a_i f_i \Big\|_{\infty}.
\end{eqnarray*}
In other words, this identification of $B_1(X^*)$ with $K_{X^*}$ is isometric.
\end{enumerate}
We proceed to the proof of Theorem \ref{t1}.
\begin{proof}[Proof of Theorem \ref{t1}]
Let $A$ be an analytic subset of $\sbs$ such that every $X\in A$ has separable dual.
Denote by $\sd$ the set of all $X\in\sbs$ with separable dual. It is co-analytic (see,
for instance, \cite[Theorem 33.24]{Kechris}). Hence, by Lusin's separation theorem
(see \cite[Theorem 14.7]{Kechris}), there exists $Z\subseteq\sd$ Borel
with $A\subseteq Z$. Define $G\subseteq Z\times H$ by
\[ (X,f)\in G\Leftrightarrow f\in K_{X^*}. \]
It follows by property (P1) above that $G$ is a Borel set such that for every $X\in Z$
the section $G_X$ of $G$ at $X$ is non-empty, compact and norm-separable.
We apply Proposition \ref{t2} and we get a sequence $f_n:Z\to H$ $(n\in\nn)$
of Borel selectors of $G$ such that for every $X\in Z$ the sequence
$\big( f_n(X)\big)$ is norm dense in $G_X=K_{X^*}$. Notice that, by
property (P2) above, for every $Y\in\sbs$ and every $X\in Z$ we have
\begin{eqnarray*}
Y\cong X^* & \Leftrightarrow & \exists (y_n)\in Y^\nn \ \exists k\geq 1
\text{ with } \overline{\mathrm{span}}\{y_n:n\in\nn\}=Y \\
& & \text{and } (y_n) \stackrel{k}{\sim} \big(f_n(X)\big)
\end{eqnarray*}
where as usual $(y_n) \stackrel{k}{\sim} \big(f_n(X)\big)$ if for every $m\in\nn$ and
every $a_0,...,a_m\in\rr$ we have
\[ \frac{1}{k}\Big\| \sum_{n=0}^m a_n y_n\Big\| \leq \Big\| \sum_{n=0}^m a_n f_n(X)
\Big\|_{\infty}\leq k \Big\| \sum_{n=0}^m a_n y_n\Big\|. \]
For every $k\in\nn$ with $k\geq 1$, consider the relation $E_k$ in
$C(2^\nn)^\nn\times H^\nn$ defined by
\[ \big( (y_n),(h_n)\big)\in E_k \Leftrightarrow (y_n) \stackrel{k}{\sim} (h_n).\]
Then $E_k$ is Borel as
\begin{eqnarray*}
(y_n) \stackrel{k}{\sim} (h_n) &\Leftrightarrow & \forall m \ \forall a_0,...,a_m\in
\mathbb{Q} \ \Big( \forall l \ \Big| \sum_{n=0}^m a_n h_n(l)\Big| \leq k\Big\| \sum_{n=0}^m
a_ny_n \Big\| \Big) \\
& & \text{and } \Big( \forall p \ \exists i \ \frac{1}{k}\Big\| \sum_{n=0}^m
a_ny_n \Big\| - \frac{1}{p+1} \leq \Big| \sum_{n=0}^m a_n h_n(i)\Big| \Big).
\end{eqnarray*}
The sequence $(f_n)$ consists of Borel functions, and so, the relation $I_k$ in
$C(2^\nn)^\nn\times Z$ defined by
\[ \big( (y_n),X\big)\in I_k \Leftrightarrow \big( (y_n), (f_n(X)) \big)\in E_k \]
is Borel. Finally, the relation $S$ in $\sbs\times C(2^\nn)^\nn$ defined by
\[ \big(Y,(y_n)\big)\in S\Leftrightarrow (\forall n \ y_n\in Y) \text{ and }
\overline{\mathrm{span}}\{y_n:n\in\nn\}=Y \]
is Borel (see \cite[Lemma 2.6]{Bos}). Now let $A^*=\{ Y\in\sbs:\exists X\in A
\text{ with } X^*\cong Y\}$. It follows by the above discussion that
\begin{eqnarray*}
Y\in A^* & \Leftrightarrow & \exists X\in A \ \exists (y_n)\in C(2^\nn)^\nn \
\exists k\geq 1 \text{ with } \big( Y,(y_n)\big)\in S \\
& & \text{and } \big( (y_n), X \big)\in I_k.
\end{eqnarray*}
Clearly the above formula gives an analytic definition of $A^*$, as desired.
\end{proof}

%-------------Further consequences-------------%

\section{Further Consequences}

The following proposition is a second application
of Proposition \ref{t2}. It implies that, although question (Q1) stated in
the introduction is false, its relativized version to any analytic subset of
$\mathrm{SD}$ is true. Specifically, we have the following.
\begin{prop}
\label{p1} Let $A$ be an analytic class of separable dual spaces.
Let also $B$ be an analytic subset of $\mathrm{SD}$. Then the set
$A_*(B)=\{ X\in B: \exists Y\in A \text{ with } X^*\cong Y\}$ is analytic.
\end{prop}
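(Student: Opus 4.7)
The plan is to mimic the proof of Theorem \ref{t1} almost verbatim, with two minor adjustments. The ``ambient space'' of spaces to be dualized is now the analytic set $B$ rather than all of $\sd$, so I will first need to sandwich $B$ between analytic and Borel envelopes in order to apply Proposition \ref{t2}, which requires a standard Borel parameter space. Second, I will add an extra conjunct ``$X\in B$'' at the end of the analytic formula defining $A_*(B)$.

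First I would invoke Lusin's separation theorem: since $B$ is analytic, $\sd$ is co-analytic, and $B\subseteq\sd$, there is a Borel set $Z$ with $B\subseteq Z\subseteq\sd$. Next I would repeat the construction from the proof of Theorem \ref{t1} with $Z$ in place of the previous Borel envelope. Form $G\subseteq Z\times H$ by $(X,f)\in G\Leftrightarrow f\in K_{X^*}$; by property (P1) this set is Borel, and since $Z\subseteq\sd$, every section $G_X=K_{X^*}$ is non-empty, compact and norm separable. Applying Proposition \ref{t2} produces a sequence of Borel selectors $f_n:Z\to H$ such that $(f_n(X))$ is norm dense in $K_{X^*}$ for every $X\in Z$, and hence, by property (P2), the linear span of $(f_n(X))$ in $H$ under $\|\cdot\|_\infty$ is isometric to a dense subspace of $X^*$.

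Using exactly the Borel relations $S\subseteq\sbs\times C(2^\nn)^\nn$ and $I_k\subseteq C(2^\nn)^\nn\times Z$ from the proof of Theorem \ref{t1}, the standard characterization of isomorphism via equivalence of basic sequences gives, for every $X\in Z$ and $Y\in\sbs$, that $Y\cong X^*$ iff there exist $(y_n)\in C(2^\nn)^\nn$ and $k\geq 1$ with $(Y,(y_n))\in S$ and $((y_n),X)\in I_k$. I would then conclude with the formula
\[ X\in A_*(B)\ \Leftrightarrow\ X\in B\ \text{and}\ \exists Y\in A\ \exists(y_n)\in C(2^\nn)^\nn\ \exists k\geq 1\ \text{with}\ (Y,(y_n))\in S\ \text{and}\ ((y_n),X)\in I_k. \]
Since $A$ and $B$ are analytic while $S$ and $I_k$ are Borel, the right-hand side is a conjunction of analytic predicates, hence analytic.

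I do not expect a serious obstacle; the argument is essentially a replay of Theorem \ref{t1}. The only place that calls for a small degree of care is the passage from the analytic set $B$ to a Borel set $Z$, since Proposition \ref{t2} is stated for standard Borel parameter spaces. Lusin separation handles this cleanly, and adding the analytic clause ``$X\in B$'' at the end preserves analyticity because the class of analytic sets is closed under finite intersections and projections.
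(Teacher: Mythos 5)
Your proposal is correct and coincides with the paper's own proof: the paper likewise separates $B$ from the complement of $\mathrm{SD}$ to obtain a Borel envelope $Z$, reuses the set $G$, the selectors from Proposition \ref{t2}, and the relations $S$ and $I_k$ from the proof of Theorem \ref{t1}, and defines $A_*(B)$ by the same analytic formula with the extra conjunct $X\in B$. No substantive differences.
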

\begin{proof}
Arguing as in the proof of Theorem \ref{t1}, we find a Borel subset $Z$ of $\sd$
such that $B\subseteq Z$. Define $G\subseteq Z\times H$ by $(X,f)\in G$ if and
only if $f\in K_{X^*}$. Then $G$ is Borel. Let $f_n:Z\to H$ $(n\in\nn)$ be the
sequence of Borel selectors of $G$ obtained by Proposition \ref{t2}. Let
also $I_k$ $(k\in\nn)$ and $S$ be the relations defined in the proof of
Theorem \ref{t1}. Now observe that
\begin{eqnarray*}
X\in A_*(B) & \Leftrightarrow & (X\in B) \text{ and } \big[ \exists Y\in A \
\exists (y_n)\in C(2^\nn)^\nn \ \exists k\geq 1 \text{ with } \\
& & \big( Y,(y_n)\big)\in S \text{ and } \big( (y_n), X\big)\in I_k \big].
\end{eqnarray*}
Hence $A_*(B)$ is analytic, as desired.
\end{proof}
\begin{rem}
Related to Proposition \ref{p1}, the following question is open to us.
Let $\phi$ be a co-analytic rank on $\mathrm{SD}$. Let also $A$ be an
analytic class of separable dual spaces such that for every $Y\in A$
there exists $\xi_Y<\omega_1$ with $\sup\{ \phi(X): X^*\cong Y\}<\xi_Y$.
Is, in this case, the set $A_*=\{X\in\sbs:\exists Y\in A \text{ with }
X^*\cong Y\}$ analytic? If this is true, then the counterexample to question
(Q1), presented in the introduction, is (in a sense) unique. We notice that
if we further assume that $\sup\{ \xi_Y: Y\in A\}<\omega_1$,
then Proposition \ref{p1} implies that the answer is positive.
\end{rem}
For every Banach space $X$ denote by $\sz(X)$ the Szlenk index
of $X$ (see \cite{Sz}). Let $\xi$ be a countable ordinal
and consider the class
\[ \mathcal{S}_\xi=\big\{ X\in\sbs: \max\{ \sz(X), \sz(X^*)\}\leq \xi\big\}.\]
By Theorem \ref{t1} and Proposition \ref{p1} we have the following.
\begin{cor}
\label{c1} For every countable ordinal $\xi$ the class
$\sss_\xi$ is analytic.
\end{cor}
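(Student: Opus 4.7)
The plan is to realize $\sss_\xi$ as the class $A_*(B)$ produced by Proposition \ref{p1} with the single choice $A=B=B_\xi:=\{X\in\sbs: \sz(X)\leq\xi\}$, after first establishing that $B_\xi$ is Borel.

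The first, and only non-routine, step is to show that $B_\xi$ is a Borel subset of $\sbs$ (and that $B_\xi\subseteq\sd$). This rests on the classical fact---proved via the same Szlenk-type derivative analysis carried out for $\mathbb{D}_\ee$ in Lemma \ref{l2}---that the Szlenk derivative induces a Borel map on the standard Borel space of weak${}^*$-compact subsets of the dual ball. Consequently $\sz$ is a $\PB^1_1$-rank on $\sd$, and by \cite[Theorem 34.10]{Kechris} each sublevel $B_\xi$ is Borel. The containment $B_\xi\subseteq\sd$ is automatic, since $\sz(X)<\omega_1$ forces $X^*$ to be separable.

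With this in hand, I would apply Proposition \ref{p1} with $A=B=B_\xi$, noting that both hypotheses are met: $A$ is an analytic class of spaces with separable dual and $B$ is an analytic subset of $\sd$. The proposition gives that
\[ A_*(B)=\{X\in B_\xi: \exists Y\in B_\xi \text{ with } X^*\cong Y\} \]
is analytic. It then remains to identify $A_*(B)$ with $\sss_\xi$, which is a matter of isomorphism invariance of the Szlenk index: if $X^*\cong Y$ with $\sz(Y)\leq\xi$, then $\sz(X^*)\leq\xi$; conversely, for $X\in\sss_\xi$, the space $X^*$ is itself separable (since $\sz(X^*)<\omega_1$ makes $X^{**}$, and hence $X^*$, separable), so $X^*$ may be realized as an element of $\sbs$ and used as the witness $Y\in B_\xi$. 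Theorem \ref{t1} enters at this point as a companion statement: applied to the analytic class $\sss_\xi$ just obtained, it delivers at no extra cost that $\sss_\xi^*=\{Y:\exists X\in\sss_\xi,\ Y\cong X^*\}$ is analytic too. The main obstacle is packaging the Szlenk derivative so that the $\PB^1_1$-rank formalism applies; beyond this the argument is purely formal.
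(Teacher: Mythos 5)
Your skeleton is the right one (Bossard's theorem that $\sz$ is a co-analytic rank on $\sd$, hence $B_\xi=\{X:\sz(X)\leq\xi\}$ is Borel, followed by an application of Proposition \ref{p1} and the identification of the resulting set with $\sss_\xi$), but there is a genuine gap in how you invoke Proposition \ref{p1}: you misstate its hypothesis. The proposition requires $A$ to be an analytic class of \emph{separable dual spaces}, i.e.\ separable spaces each isomorphic to the dual of some Banach space (this is the same usage as in question (Q1), whose counterexample is the isomorphism class of $\ell_1=c_0^*$); it does not ask that the members of $A$ have separable dual, which is what you verify. Your choice $A=B_\xi$ fails the actual hypothesis: for instance $c_0\in B_\xi$ once $\xi\geq\sz(c_0)=\omega$, and $c_0$ is not isomorphic to any dual space. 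So the application of Proposition \ref{p1} as a black box is not legitimate as written.

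This is precisely the point the paper's proof is engineered around: it takes $A=B\cap B^*$ with $B=\{X\in\sd:\sz(X)\leq\xi\}$, so that every member of $A$ is by construction (isomorphic to) a dual of a space in $B$, and it is here that Theorem \ref{t1} is genuinely needed --- to know that $B^*$, and hence $A$, is analytic. In your write-up Theorem \ref{t1} appears only as a decorative ``companion statement,'' whereas in the paper it is an essential ingredient of the corollary. One can also check that $A_*(B)$ is unchanged by replacing your $A$ with the paper's (the non-dual members of $B_\xi$ never serve as witnesses $Y\cong X^*$), and indeed the proof of Proposition \ref{p1} never actually uses the dual-space hypothesis on $A$; but either repair requires an argument you do not supply --- respectively, the paper's extra step via Theorem \ref{t1}, or opening up the proof of Proposition \ref{p1}. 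The remaining ingredients of your proposal (Borelness of $B_\xi$ via the rank formalism, $B_\xi\subseteq\sd$, and the isomorphism-invariance argument identifying the output with $\sss_\xi$) are correct and agree with the paper.
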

\begin{proof}
Let us fix a countable ordinal $\xi$. As in the proof of Theorem
\ref{t1}, consider the subset $\sd$ of $\sbs$ consisting of all
Banach spaces with separable dual. We set $B=\{X\in\sd: \sz(X)\leq \xi\}$
and $A=B\cap B^*$. Notice that
\[ A=\big\{ Y\in\sbs: \sz(Y)\leq \xi \text{ and } (\exists X\in\sbs
\text{ with } \sz(X)\leq \xi \text{ and } Y\cong X^*)\big\}. \]
By \cite[Theorem 4.11]{Bos}, the map $X\mapsto \sz(X)$ is a co-analytic
rank on $\sd$. It follows that the set $B$ is analytic (in fact
Borel -- see \cite{Kechris}).  By Theorem \ref{t1}, so is the set $A$.
By Proposition \ref{p1}, we see that the set $A_*(B)$ is analytic.
As $A_*(B)=\sss_\xi$, the result follows.
\end{proof}
Let $\refl$ be the subset of $\sd$ consisting of all
separable reflexive spaces. Recently, E. Odell, Th.
Schlumprecht and A. Zs\a'{a}k have shown \cite[Theorem D]{OSZ}
that for every countable ordinal $\xi$ the class
\[ \ccc_\xi=\big\{ X\in\refl: \max\{\sz(X),\sz(X^*)\}\leq\xi\big\}\]
is also analytic. Their proof is based on Corollary \ref{c1} above, as well as, on
a deep refinement of M. Zippin's embedding theorem \cite{Z} and on a sharp
universality result concerning the classes
$\{\ccc_{\omega^{\xi\cdot\omega}}:\xi<\omega_1\}$
(Theorem B and Theorem C respectively in \cite{OSZ}).

%-------------------------------------------------------------------%
%                           Bibliography                            %
%-------------------------------------------------------------------%

\end{document}